\newtheorem{theorem}{Theorem}[section]
\newtheorem{lemma}[theorem]{Lemma}
\theoremstyle{definition}
\newtheorem{conjecture}{Conjecture}
\theoremstyle{remark}
\numberwithin{equation}{section}
\def\blfootnote{\xdef\@thefnmark{}\@footnotetext}
\begin{document}

\title{A note on some spectral properties of \\ generalised pancake graphs}

\author{Gary Greaves\thanks{Division of Mathematical Sciences, School of Physical and Mathematical Sciences, Nanyang Technological University, 21 Nanyang Link, Singapore 637371. Email: gary@ntu.edu.sg.} \qquad Haoran Zhu\thanks{Division of Mathematical Sciences, School of Physical and Mathematical Sciences, Nanyang Technological University, 21 Nanyang Link, Singapore 637371. Email: zhuh0031@e.ntu.edu.sg} }

\date{}

\maketitle

\begin{abstract}
We prove that the spectral gap of generalised pancake graphs is strictly less than 2 and strictly less than 1 for burnt pancake graphs. 
In addition, we establish lower bounds on the multiplicities of certain integer eigenvalues of generalised pancake graphs. Together, these results settle two recent conjectures of Blanco and Buehrle.
\end{abstract}

{\noindent \it Keywords:} Generalised pancake graph, burnt pancake graph, spectral gap, eigenvalue multiplicity, integral eigenvalues.
    
{\noindent \it Mathematics Subject Classification:} 05C50, 15A18, 05C22, 20C30.

\section{Introduction}

Let $[n]:=\{1,\dots,n\}$ and $\mathbb{Z}_m:=\mathbb{Z}/m\mathbb{Z}$. 
Denote by $S(m,n)$ the wreath product $\mathbb{Z}_m\wr S_n$, which one interprets as a group of \emph{coloured permutations}.
An element $\sigma\in S(m,n)$ will be written as a pair $\sigma=(\chi,\psi)$, where
$\chi:[n]\to\mathbb{Z}_m$ is the \emph{colour function} and $\psi\in S_n$ is the underlying permutation.
Equivalently, we identify $\sigma$ with its coloured one–line notation
\[
\sigma=\bigl(\psi(1)^{\chi(1)},\,\psi(2)^{\chi(2)},\,\ldots,\,\psi(n)^{\chi(n)}\bigr).
\]

Given $1\le i\le j\le n$ and $\varepsilon\in\{\pm 1\}$, the \emph{generalised substring reversal} is the map $s_{i,j}^{\varepsilon}:S(m,n)\to S(m,n)$ defined by $(\chi,\psi) \mapsto (\chi^\prime,\psi^\prime)$, where
\[
\psi'(t)=
\begin{cases}
\psi(i+j-t),& i\le t\le j,\\
\psi(t),& \text{otherwise},
\end{cases}\,\,\,
\chi'(t)=
\begin{cases}
\chi(i+j-t)+\varepsilon \ (\bmod m),& i\le t\le j,\\
\chi(t),& \text{otherwise}.
\end{cases}
\]
In other words, $s_{i,j}^{\varepsilon}$ reverses the sub-block $(\psi(i),\psi(i+1),\dots,\psi(j-1),\psi(j))$ of $\psi$ and adds $\varepsilon$ to $\chi(k)$ for each $k \in \{i,i+1,\dots,j-1,j\}$.
In the setting of \emph{generalised pancake flipping}, one only considers blocks that are prefixes: $r_k^{\varepsilon}:=s_{1,k}^{\varepsilon}$ for $1\le k\le n$. Note that, when $m=2$, we have $r_k^{+}=r_k^{-}$.

\begin{figure}[htbp]
	\centering
\begin{tikzpicture}[x=1cm,y=1cm, rotate=210, scale=0.80]

\tikzset{
cell/.style={draw=red!20, line width=1.15pt},
cell1/.style={draw=red!20, line width=1.15pt, fill=blue!10},
cell2/.style={draw=red!20, line width=1.15pt, fill=yellow!10},
r2/.style={draw=black!95,-},
r1/.style={draw=black!95,-},
Avertex/.style={circle,thin,draw=black!15,fill=black!90, inner sep=0pt, minimum width=6pt},
Bvertex/.style={circle,thin,draw=black!15,fill=black!90, inner sep=0pt, minimum width=6pt},
cluster/.style={draw=blue, line width=1.15pt}
}

\begin{scope}
    \coordinate (O0) at (0,6.2);
\coordinate (O1) at (5.36,3.1);
\coordinate (O2) at (5.36,-3.1);
\coordinate (O3) at (0,-6.2);
\coordinate (O4) at (-5.36,-3.1);
\coordinate (O5) at (-5.36,3.1);
\coordinate (I0) at (0,3.1);
\coordinate (I1) at (2.68,1.55);
\coordinate (I2) at (2.68,-1.55);
\coordinate (I3) at (0,-3.1);
\coordinate (I4) at (-2.68,-1.55);
\coordinate (I5) at (-2.68,1.55);

\draw[cell] (O0)--(O1)--(O2)--(O3)--(O4)--(O5)--cycle;
\draw[cell] (I0)--(I1)--(I2)--(I3)--(I4)--(I5)--cycle;
\draw[cell1] (O0) -- (I0) -- (I1) -- (O1) -- cycle;
\draw[cell2] (O1) -- (I1) -- (I2) -- (O2) -- cycle;
\draw[cell1] (O2) -- (I2) -- (I3) -- (O3) -- cycle;
\draw[cell2] (O3) -- (I3) -- (I4) -- (O4) -- cycle;
\draw[cell1] (O4) -- (I4) -- (I5) -- (O5) -- cycle;
\draw[cell2] (O5) -- (I5)  -- (I0) -- (O0) -- cycle;
\end{scope}

\begin{scope}[scale=0.95]
\coordinate (A00) at ( 1.80, 3.13);
\coordinate (A01) at ( 1.35, 4.89);
\coordinate (A02) at ( 3.55, 3.61);
\coordinate (A10) at (-3.61, 0.00);
\coordinate (A11) at (-4.92,-1.26);
\coordinate (A12) at (-4.92, 1.26);
\coordinate (A20) at ( 1.80,-3.13);
\coordinate (A21) at ( 3.55,-3.61);
\coordinate (A22) at ( 1.35,-4.89);

\coordinate (B00) at (-1.80,-3.13);
\coordinate (B01) at ( 3.61, 0.00);
\coordinate (B02) at (-1.80, 3.13);
\coordinate (B10) at (-1.35,-4.89);
\coordinate (B11) at ( 4.92, 1.26);
\coordinate (B12) at (-3.55, 3.61);
\coordinate (B20) at (-3.55,-3.61);
\coordinate (B21) at ( 4.92,-1.26);
\coordinate (B22) at (-1.35, 4.89);

\coordinate (LA00) at (1.26, 2.39);
\coordinate (LA01) at (1.90, 5.05);
\coordinate (LA02) at (3.42, 3.87);
\coordinate (LA10) at (-2.42, 0);
\coordinate (LA11) at (-5.23,-0.86);
\coordinate (LA12) at (-5.23, 0.86);
\coordinate (LA20) at (1.26, -2.39);
\coordinate (LA21) at ( 3.42,-3.87);
\coordinate (LA22) at ( 1.90,-5.05);

\coordinate (LB00) at (-1.77,-2.73);
\coordinate (LB01) at ( 3.17, 0);
\coordinate (LB02) at (-1.77, 2.73);
\coordinate (LB10) at (-1.91,-5.05);
\coordinate (LB11) at ( 5.35, 1.46);
\coordinate (LB12) at (-3.79, 3.89);
\coordinate (LB20) at (-3.79,-3.89);
\coordinate (LB21) at ( 5.35,-1.46);
\coordinate (LB22) at (-1.91, 5.05);

\node[Avertex] at (A00) {};
\node[Avertex] at (A01) {};
\node[Avertex] at (A02) {};
\node[Avertex] at (A10) {};
\node[Avertex] at (A11) {};
\node[Avertex] at (A12) {};
\node[Avertex] at (A20) {};
\node[Avertex] at (A21) {};
\node[Avertex] at (A22) {};

\node[Bvertex] at (B00) {};
\node[Bvertex] at (B01) {};
\node[Bvertex] at (B02) {};
\node[Bvertex] at (B10) {};
\node[Bvertex] at (B11) {};
\node[Bvertex] at (B12) {};
\node[Bvertex] at (B20) {};
\node[Bvertex] at (B21) {};
\node[Bvertex] at (B22) {};



\draw[r1] (A00)  -- (A10);
\draw[r1] (A00) -- (A20);
\draw[r1] (A01) edge [bend left = 18.01] (A11);
\draw[r1] (A01) edge [bend right = 18.08] (A21);
\draw[r1] (A02) edge [bend left = 18.01] (A12);
\draw[r1] (A02) edge [bend right = 18.08] (A22);
\draw[r1] (A10) -- (A20);
\draw[r1] (A11) edge [bend left = 18.01] (A21);
\draw[r1] (A12) edge [bend left = 18.01] (A22);

\draw[r1] (B00) -- (B10);
\draw[r1] (B00) -- (B20);
\draw[r1] (B01) -- (B11);
\draw[r1] (B01) -- (B21);
\draw[r1] (B02) -- (B12);
\draw[r1] (B02) -- (B22);
\draw[r1] (B10) -- (B20);
\draw[r1] (B11) -- (B21);
\draw[r1] (B12) -- (B22);

\draw[r2] (A00) -- (B11);
\draw[r2] (A00) -- (B22);
\draw[r2] (A01) -- (B02);
\draw[r2] (A01) -- (B21);
\draw[r2] (A02) -- (B01);
\draw[r2] (A02) -- (B12);
\draw[r2] (A10) -- (B12);
\draw[r2] (A10) -- (B20);
\draw[r2] (A11) -- (B00);
\draw[r2] (A11) -- (B22);
\draw[r2] (A12) -- (B02);
\draw[r2] (A12) -- (B10);
\draw[r2] (A20) -- (B10);
\draw[r2] (A20) -- (B21);
\draw[r2] (A21) -- (B01);
\draw[r2] (A21) -- (B20);
\draw[r2] (A22) -- (B00);
\draw[r2] (A22) -- (B11);

\node[Avertex] at (A00) {};
\node[Avertex] at (A01) {};
\node[Avertex] at (A02) {};
\node[Avertex] at (A10) {};
\node[Avertex] at (A11) {};
\node[Avertex] at (A12) {};
\node[Avertex] at (A20) {};
\node[Avertex] at (A21) {};
\node[Avertex] at (A22) {};

\node[Bvertex] at (B00) {};
\node[Bvertex] at (B01) {};
\node[Bvertex] at (B02) {};
\node[Bvertex] at (B10) {};
\node[Bvertex] at (B11) {};
\node[Bvertex] at (B12) {};
\node[Bvertex] at (B20) {};
\node[Bvertex] at (B21) {};
\node[Bvertex] at (B22) {};
\end{scope}

\begin{scope}
    \node[font=\small] at (4.3,3.1) {$V_{0,1}$};
\node[font=\small] at (-4.3,-3.1) {$V_{0,2}$};
\node[font=\small] at (-4.9,2.4) {$V_{1,1}$};
\node[font=\small] at (4.9,-2.4) {$V_{1,2}$};
\node[font=\small] at (0.4,-5.4) {$V_{2,1}$};
\node[font=\small] at (-0.4,5.4) {$V_{2,2}$};
\end{scope}

\end{tikzpicture}
\caption{Generalised pancake graph $\mathcal P_3(2)$ with its vertices grouped according to the vertex partition $\mathfrak P_{3,2}$.}
\label{fig:P32}
\end{figure}

The \textbf{generalised pancake graph} $\mathcal{P}_m(n)$ (see~\cite{BB3}) is the undirected Cayley graph $\mathcal{P}_m(n)=\mathrm{Cay}\left (S(m,n),R_m\right)$, where
\[
R_m=
\begin{cases}
\{\,r_k^{+}: 1\le k\le n\,\},& \text{ if } m=2, \\
\{\,r_k^{-} \;:\; 1\le k\le n\,\} \cup \{\,r_k^{+} \; : \; 1\le k\le n\,\},& \text{ if } m\ge3.
\end{cases}
\]
In other words, the vertices of the graph $\mathcal{P}_m(n)$ are all $m$–coloured permutations $S(m,n)$ where two vertices are adjacent if and only if one can be obtained from the other by applying a prefix reversal $r_k^{\varepsilon}$. In particular, $\mathcal{P}_1(n)$ is the classical \textbf{pancake graph}, while $\mathcal{P}_2(n)$ is the \textbf{burnt pancake graph} in which “adding $1\bmod2$’’ coincides with changing the sign of each flipped entry. For $m\geqslant 3$ the graph $\mathcal{P}_m(n)$ is $2n$–regular (two flips for each $k \in [n]$), whereas for $m=2$ it is $n$–regular (the two flips coincide).
In Figure~\ref{fig:P32}, the graph $\mathcal P_3(2)$ is illustrated.
The vertices are partitioned into vertex subsets
\[
V_{i,j}\;:=\;\bigl\{\,(\chi,\psi)\in S(3,2):\;
\chi(j)=i \text{ and } \psi(j)=1\,\bigr\}
\]
for each $i\in\{0,1,2\}$ and $j\in\{1,2\}$; these subsets are demarcated by trapezia. Within each trapezium, the vertex closest to the central hexagon corresponds to the coloured permutation in which $2$ has colour $0$. Proceeding clockwise from this vertex around the trapezium yields first the coloured permutation in which $2$ has colour $1$, followed by the coloured permutation in which $2$ has colour $2$.

Pancake graphs and their generalisations exhibit diverse properties, such as minimal flip counts~\cite{BFR}, pancyclicity~\cite{BB2} and applications in genome rearrangement~\cite{KS}, parallel computing~\cite{KF}, and information theory~\cite{HS}.
In genomics,  pancake flipping models chromosomal inversions, where extensive DNA segments reverse orientation, a process analogous to the burnt pancake sorting~\cite{HP, H}.

We address three spectral properties of the graphs $\mathcal P_m(n)$: the spectral gap, eigenvalue multiplicity, and integral eigenvalues. 
Our motivation comes from recent work~\cite{BB,BB3} of Blanco and Buehrle on the integral spectra of burnt pancake graphs and generalised pancake graphs.
In this note, we prove both~\cite[Conjecture 9(i)]{BB} (Theorem~\ref{thm:main}) and~\cite[Conjecture 6.1]{BB3} (Theorem~\ref{thm:mult}). 
We denote by $\lambda_i(\Gamma)$ the $i$th largest eigenvalue of the adjacency matrix of $\Gamma$.

\begin{theorem}[Blanco-Buehrle's spectral gap conjecture]\label{thm:main}
    Let $n, m \geqslant 2$ be integers.
    Then
    \[
    \lambda_1(\mathcal P_m(n)) - \lambda_2(\mathcal P_m(n)) < \begin{cases}
        1 & \text{ if $m = 2$;} \\
        2 & \text{ otherwise.}
    \end{cases}
    \]
\end{theorem}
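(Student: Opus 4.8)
The plan is to reduce the theorem to a bound on one small, explicit matrix. Write $d$ for the degree of $\mathcal P_m(n)$, so $d=2n$ if $m\geqslant 3$ and $d=n$ if $m=2$, and let $g=2$ if $m\geqslant 3$ and $g=1$ if $m=2$; thus the claim is $\lambda_1-\lambda_2<g$. Since $\mathcal P_m(n)$ is $d$-regular we have $\lambda_1(\mathcal P_m(n))=d$, and any eigenvalue of a graph that is strictly smaller than $\lambda_1$ is automatically at most $\lambda_2$, so it is enough to exhibit a single eigenvalue of $\mathcal P_m(n)$ lying in the open interval $(d-g,\,d)$. Fix a non-trivial $m$-th root of unity $\omega$ with $c:=\omega+\omega^{-1}\neq 0$; such an $\omega$ exists for every $m\geqslant 2$ (take $\omega=-1$ when $m=4$ and $\omega=e^{2\pi i/m}$ otherwise), and note $c<2$. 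Let $\phi$ be the $n$-dimensional monomial representation of $S(m,n)=\mathbb Z_m\wr S_n$ in which $S_n$ permutes the coordinates of $\mathbb C^n$ and the $j$-th factor $\mathbb Z_m$ acts on $e_j$ by $\omega$. Directly from the definition of $r_k^{\varepsilon}$ one gets $\phi(r_k^{\varepsilon})e_j=e_j$ for $j>k$ and $\phi(r_k^{\varepsilon})e_j=\omega^{\varepsilon}e_{k+1-j}$ for $j\leqslant k$. Because $R_m=R_m^{-1}$, the standard representation-theoretic description of Cayley-graph spectra guarantees that every eigenvalue of $M:=\sum_{s\in R_m}\phi(s)$ is an eigenvalue of $\mathcal P_m(n)$, so everything comes down to the $n\times n$ matrix $M$.

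Summing the formula for $\phi(r_k^{\varepsilon})$ over $\varepsilon\in\{\pm 1\}$ (a single term when $m=2$, since then $r_k^{+}=r_k^{-}$) and then over $1\leqslant k\leqslant n$ yields
\[
Me_j=\alpha\,(j-1)\,e_j+\beta\,(e_1+e_2+\cdots+e_{n+1-j}),\qquad 1\leqslant j\leqslant n,
\]
where $(\alpha,\beta)=(2,c)$ if $m\geqslant 3$ and $(\alpha,\beta)=(1,-1)$ if $m=2$. From this formula one reads off that $M$ is real symmetric, that $\alpha n=d$ and $\alpha=g$, so that $M_{nn}=\alpha(n-1)=d-g$ while $M_{11}=M_{1n}=\beta$; in particular the principal submatrix of $M$ on the rows and columns indexed by $\{1,n\}$ is $\left(\begin{smallmatrix}\beta&\beta\\ \beta&\,d-g\,\end{smallmatrix}\right)$.

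Two short estimates now finish the proof. For a lower bound, the characteristic polynomial of $\left(\begin{smallmatrix}\beta&\beta\\ \beta&\,d-g\,\end{smallmatrix}\right)$ evaluates to $-\beta^{2}<0$ at $\lambda=d-g$ — here is the only place the choice $\beta\neq 0$, i.e.\ $c\neq 0$, is used — so its larger eigenvalue strictly exceeds $d-g$, and by Cauchy interlacing $\lambda_{\max}(M)>d-g$. For an upper bound, $M$ is a sum of $|R_m|=d$ monomial, hence unitary, matrices, so $v^{*}Mv\leqslant d$ for every unit vector $v$, with equality only if $\phi(s)v=v$ for every $s\in R_m$; applying this to $s=r_1^{\varepsilon}$ forces $v_1=0$ (because $\omega\neq 1$), and then to $s=r_k^{\varepsilon}$ for $k=2,\dots,n$ forces $v_k=0$, so the bound $d$ is never attained and $\lambda_{\max}(M)<d$. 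Hence $\lambda_{\max}(M)\in(d-g,d)$, and as it is an eigenvalue of $\mathcal P_m(n)$ we obtain $\lambda_1(\mathcal P_m(n))-\lambda_2(\mathcal P_m(n))\leqslant d-\lambda_{\max}(M)<g$.

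I expect the only delicate point to be the explicit formula for $M$: one must fix mutually consistent conventions for the wreath-product multiplication, for whether $\varepsilon$ is added before or after the block reversal, and for the side on which $R_m$ acts in the Cayley graph, and then check that $\phi$ is genuinely a representation under those conventions so that the displayed identity for $Me_j$ — in particular its symmetry — holds as stated. Everything after that (the interlacing bound and the ``no common invariant vector'' argument) is routine. It is worth flagging that $\omega=e^{2\pi i/m}$ fails exactly for $m=4$, where $c=0$ and $M$ degenerates to a diagonal matrix whose top entry is precisely $d-g$; this is the reason for the separate choice $\omega=-1$ in that single case.
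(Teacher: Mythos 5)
Your proposal is correct, and it reaches the paper's conclusion by a genuinely different route. The paper never touches representation theory (it even remarks that, since the prefix reversals are not conjugation-closed, character theory is hard to apply); instead it uses the equitable partition $\mathfrak P_{m,n}$ indexed by the position and colour of the letter $1$, whose quotient matrix is a block circulant $\operatorname{circ}(2D_n,E_n,O,\dots,O,E_n)$, and then diagonalises the circulant structure to obtain the blocks $2D_n+2\cos\tfrac{2\pi k}{m}E_n$. Your monomial representation $\phi$ produces exactly these same matrices: your $M$ equals $\alpha D_n+\beta E_n$ in the paper's notation, with $\beta=2\cos\tfrac{2\pi k}{m}$ corresponding to the character $\omega$ you chose, and your care about $m=4$ (where $\omega=e^{2\pi i/4}$ gives $\beta=0$ and $M$ degenerates to $2D_n$) is precisely the paper's insistence on a \emph{nonzero} $t<2$. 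From that point on the two arguments coincide: both extract the $2\times 2$ principal submatrix on the first and last indices and apply Cauchy interlacing to get $\lambda_1(M)>d-g$. Where you differ again is the upper bound: the paper gets $\lambda_2(\mathcal P_m(n))\geqslant\lambda_1(2D_n+tE_n)$ because $\lambda_1=2n$ is attained in a \emph{different} block of the quotient matrix, whereas you prove $\lambda_{\max}(M)<d$ directly via the ``no common fixed vector'' argument for the $d$ unitary summands --- a nice self-contained alternative. The trade-offs: your route requires verifying that $\phi$ is a homomorphism under consistent wreath-product and Cayley-graph conventions (the delicate point you rightly flag; it does check out, and in fact one can bypass it entirely by noting that eigenvectors of your $M$ lift through the block-circulant Fourier transform to eigenvectors of the paper's quotient matrix), while the paper's route is combinatorially self-contained and yields the full quotient spectrum at once, which it then reuses for the multiplicity theorem.
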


Theorem~\ref{thm:main} also improves on \cite[Theorem 4.1]{BB3}, which implies an upper bound on the spectral gap: $\lambda_1(\mathcal P_m(n)) - \lambda_2(\mathcal P_m(n)) \le 2$.

For a graph $\Gamma$ with eigenvalue $\lambda$, denote by $\operatorname{mult}_{\Gamma}(\lambda)$ the multiplicity of $\lambda$ as an eigenvalue of the adjacency matrix of $\Gamma$.

\begin{theorem}[Blanco-Buehrle's multiplicity conjecture\footnote{We warn the reader of a typo in the statement of \cite[Conjecture 6.1]{BB3} where it should read $k \in [n-1] \backslash \{\lfloor n/2\rfloor \}$.}]\label{thm:mult}
Let $m\equiv 0\pmod{4}$ and $n\geqslant 2$.  
Then
\[
  \operatorname{mult}_{\mathcal P_m(n)}(2k)\;\geqslant\;
  \begin{cases}
    3, & 1\leqslant k\leqslant n-1,\;\, k\ne \left \lfloor \dfrac{n}{2} \right \rfloor;\\
    2, & k=\left \lfloor \dfrac{n}{2}\right \rfloor.
  \end{cases}
\]
\end{theorem}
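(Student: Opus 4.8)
The plan is to exhibit, for each target eigenvalue $2k$, an explicit family of eigenvectors of $\mathcal P_m(n)$ obtained by tracking the position and colour of a single symbol, and then to bound the dimension of their span from below.

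First I would fix a symbol $v_0\in[n]$, a position $q\in\{2,\dots,n\}$, and $\omega\in\{\,i,-i\,\}$ (note $\omega^m=1$ since $4\mid m$), and set
\[
F_{v_0,q,\omega}(\chi,\psi)=\begin{cases}\omega^{\chi(\psi^{-1}(v_0))},& \psi^{-1}(v_0)=q,\\[1mm] 0,&\text{otherwise.}\end{cases}
\]
Then I would check that $F_{v_0,q,\omega}$ is an eigenvector of the adjacency matrix $A$ of $\mathcal P_m(n)$ with eigenvalue $2(q-1)$. For this, given $\sigma=(\chi,\psi)$ with $v_0$ in position $p=\psi^{-1}(v_0)$ and colour $c=\chi(p)$, one expands $(AF)(\sigma)=\sum_{k=1}^{n}\bigl(F(r_k^{+}\sigma)+F(r_k^{-}\sigma)\bigr)$: for $k<p$ the reversal $r_k^{\pm}$ leaves the position and colour of $v_0$ untouched, contributing $2(p-1)F(\sigma)$ in total; for $k\ge p$ the reversal $r_k^{\varepsilon}$ moves $v_0$ to position $k+1-p$ with colour $c+\varepsilon$, so the $\varepsilon=\pm$ pair contributes $\mathbf 1[k+1-p=q]\,\omega^{c}\bigl(\omega+\omega^{-1}\bigr)=0$ because $\omega+\omega^{-1}=0$. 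Hence $(AF)(\sigma)=2(p-1)F(\sigma)$, which is $2(q-1)F(\sigma)$ since $F(\sigma)\ne0$ forces $p=q$. (Conceptually this is the statement that the partition of $S(m,n)$ by the pair $\bigl(\psi^{-1}(v_0),\chi(\psi^{-1}(v_0))\bigr)$ is equitable, with quotient commuting with the cyclic shift of the colour coordinate; the block attached to a character $\theta$ of $\mathbb Z_m$ is $2\operatorname{diag}(0,1,\dots,n-1)+2\operatorname{Re}\theta(1)\cdot T$ for a fixed $0/1$ matrix $T$, which becomes diagonal exactly when $\operatorname{Re}\theta(1)=0$, the case available because $4\mid m$.)

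Finally I would fix $k\in\{1,\dots,n-1\}$, put $q=k+1$, and count. Across distinct symbols the supports $\{\sigma:\psi^{-1}(v_0)=q\}$ are disjoint; for a single symbol, $\chi(\psi^{-1}(v_0))$ takes every value in $\mathbb Z_m$ on its support, so the pointwise ratio $F_{v_0,q,i}/F_{v_0,q,-i}=(-1)^{\chi(\psi^{-1}(v_0))}$ is non-constant and the two are independent. Thus the $2n$ functions $F_{v_0,q,\pm i}$, $v_0\in[n]$, are linearly independent eigenvectors for the eigenvalue $2k$, so $\operatorname{mult}_{\mathcal P_m(n)}(2k)\ge 2n$. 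Since $2n\ge 4$ for $n\ge2$, this yields all the bounds in Theorem~\ref{thm:mult} with room to spare (indeed two symbols already suffice).

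I expect the eigenvector verification to be the only delicate point — chiefly the bookkeeping of which prefix reversals displace $v_0$, including the index $k$ for which $v_0$ is moved but lands back in place, all of which collapses via $\omega+\omega^{-1}=0$. That identity requires a primitive fourth root of unity inside $\mathbb Z_m$, i.e.\ exactly $4\mid m$, which is why the result is special to these $m$.
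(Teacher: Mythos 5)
Your proposal is correct, and it takes a genuinely different (and in fact stronger) route than the paper. The paper works entirely with the single equitable partition $\mathfrak P_{m,n}$ tracking the position and colour of the symbol $1$: its quotient matrix is the block circulant $\operatorname{circ}(2D_n,E_n,O,\dots,O,E_n)$, whose spectrum splits over the characters of $\mathbb Z_m$ into $\biguplus_{k}\operatorname{Spec}\bigl(2D_n+2\cos\tfrac{2\pi k}{m}E_n\bigr)$; when $4\mid m$ the two characters with $\cos\tfrac{2\pi k}{m}=0$ (namely $k=m/4,\,3m/4$) contribute two copies of $\operatorname{Spec}(2D_n)=\{0,2,\dots,2(n-1)\}$, and a cited result of Gunnells--Scott--Walden identifies $\operatorname{Spec}(2D_n+2E_n)$ as $\{0,2,\dots,2n\}\setminus\{2\lfloor n/2\rfloor\}$, which supplies the third copy of each $2k$ except the exceptional one. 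Your functions $F_{v_0,q,\pm i}$ with $v_0=1$ are precisely the lifts of the standard-basis eigenvectors of those two diagonal blocks $2D_n$ (your parenthetical remark makes this identification explicit), so for a single tracked symbol the two arguments coincide. What you add is the observation that the construction works for every tracked symbol $v_0\in[n]$, and that disjointness of supports across symbols together with the non-constant ratio $(-1)^{\chi(\psi^{-1}(v_0))}$ within a symbol yields $2n$ independent eigenvectors. This buys a uniform bound $\operatorname{mult}_{\mathcal P_m(n)}(2k)\geqslant 2n$ for every $k\in[n-1]$ with no exceptional case, which is strictly stronger than the theorem, and it dispenses entirely with the external input on $\operatorname{Spec}(2D_n+2E_n)$. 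Your eigenvector verification is sound: for $k<p$ the flip fixes the tracked symbol, the unique $k=p+q-1$ (when it is at most $n$) returns it to position $q$ with colours $c\pm1$ contributing $\omega^{c}(\omega+\omega^{-1})=0$, and all other flips contribute nothing; the hypothesis $4\mid m$ enters exactly where you say, in making $\omega=\pm i$ a well-defined character of $\mathbb Z_m$.
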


The key to our proofs is a simple observation about the form of the quotient matrix of an equitable partition of $\mathcal P_m(n)$ (see Lemma~\ref{lem:circ(DCOOC)}). 
We note in passing that, even though the graphs $\mathcal P_m(n)$ are Cayley graphs, since the set of prefix reversals is not conjugation-closed, it is difficult to apply character theory~\cite{GJ, Z} directly to compute their eigenvalues.

\section{Some spectral properties of $\mathcal P_m(n)$}

\subsection{Equitable partitions and interlacing}

Given a real symmetric $n\times n$ matrix $M$, we denote its eigenvalues in nonincreasing order $\lambda_1(M) \geqslant \lambda_2(M) \geqslant \dots \geqslant \lambda_n(M)$.

\begin{lemma}[\cite{F}]\label{lem:cauchy}
Let $A$ be an $n \times n$ real symmetric matrix and $B$ be an $m\times m$ principal submatrix of $A$.
Then, for each $i=1,\dots,m$,
\[
  \lambda_i(A)\ \geqslant \ \lambda_i(B)\ \geqslant\ \lambda_{n-m+i}(A).
\]
\end{lemma}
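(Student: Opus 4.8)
The plan is to derive the interlacing inequalities from the Courant–Fischer min–max characterisation of eigenvalues. Write $R_M(x):=x^{\top}Mx/x^{\top}x$ for the Rayleigh quotient of a real symmetric matrix $M$. For a real symmetric $N\times N$ matrix $M$ and each index $i$, recall that
\[
\lambda_i(M)=\max_{\dim U=i}\ \min_{0\ne x\in U}R_M(x)
=\min_{\dim W=N-i+1}\ \max_{0\ne x\in W}R_M(x),
\]
where $U$ and $W$ range over subspaces of $\mathbb{R}^N$ of the stated dimension. Conjugating $A$ by a suitable permutation matrix changes neither its spectrum nor the hypothesis, so we may assume $B$ is the leading $m\times m$ block of $A$. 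Let $\iota\colon\mathbb{R}^m\hookrightarrow\mathbb{R}^n$ be the inclusion onto the first $m$ coordinates; the one fact we need is that $R_A(\iota x)=R_B(x)$ for every nonzero $x\in\mathbb{R}^m$, which holds because $(\iota x)^{\top}A(\iota x)=x^{\top}Bx$ and $(\iota x)^{\top}(\iota x)=x^{\top}x$.

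For the bound $\lambda_i(B)\le\lambda_i(A)$ I would pick an $i$-dimensional subspace $U\subseteq\mathbb{R}^m$ attaining the outer maximum in the first formula for $\lambda_i(B)$. Then $\iota(U)$ is an $i$-dimensional subspace of $\mathbb{R}^n$ on which the Rayleigh quotients of $A$ coincide with those of $B$ on $U$, whence $\lambda_i(A)\ge\min_{0\ne y\in\iota(U)}R_A(y)=\min_{0\ne x\in U}R_B(x)=\lambda_i(B)$.

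For the bound $\lambda_i(B)\ge\lambda_{n-m+i}(A)$, write $k:=n-m+i$ and combine the second (min–max) formula for $\lambda_i(B)$ with the first (max–min) formula for $\lambda_k(A)$. Given any subspace $W\subseteq\mathbb{R}^m$ with $\dim W=m-i+1$ and any subspace $V\subseteq\mathbb{R}^n$ with $\dim V=k$, the count $\dim\iota(W)+\dim V=(m-i+1)+(n-m+i)=n+1>n$ forces $\iota(W)\cap V$ to contain some nonzero vector $z=\iota(w)$; then $\min_{0\ne y\in V}R_A(y)\le R_A(z)=R_B(w)\le\max_{0\ne x\in W}R_B(x)$. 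Taking the maximum over all such $V$ gives $\lambda_k(A)\le\max_{0\ne x\in W}R_B(x)$, and then the minimum over all such $W$ gives $\lambda_k(A)\le\lambda_i(B)$.

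There is no genuine obstacle here, as the result is classical; the only step needing care is the dimension count producing the common vector $z\in\iota(W)\cap V$, since this is exactly what fixes the index shift by $n-m$. As an alternative I would note that it suffices to treat the case $m=n-1$, where the claim reads $\lambda_i(A)\ge\lambda_i(B)\ge\lambda_{i+1}(A)$, and then iterate the deletion of a single row and its matching column $n-m$ times, composing the inequalities.
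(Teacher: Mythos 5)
Your argument is correct and complete. Note, though, that the paper does not prove this lemma at all: it is quoted as a classical result with a citation to Fisk's note, whose ``very short proof'' runs along entirely different lines (expanding the characteristic polynomial of $A$ in terms of that of the principal submatrix and the eigenbasis of $B$, reading off sign changes of $\det(xI-A)$ at the eigenvalues of $B$, and handling degenerate spectra by a continuity/perturbation argument). What you give instead is the standard Courant--Fischer proof: both halves are handled cleanly, the reduction to a leading block via permutation similarity is legitimate, and the dimension count $\dim\iota(W)+\dim V=n+1$ is exactly the point that produces the index shift $n-m$, as you correctly flag. Your approach has the advantage of being self-contained and of giving both inequalities of the two-sided bound in one uniform framework (Fisk's argument is stated for $m=n-1$ and the general case is obtained by iterating, as in your closing remark); the polynomial approach buys a strict-interlacing statement in the generic case but needs the limiting argument to cover repeated eigenvalues. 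Either route fully justifies the lemma as used in the paper, where only the case $i=1$ with a $2\times 2$ principal submatrix is needed.
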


Given a graph $\Gamma=(V,E)$, a partition $\mathfrak P=\{V_1,\dots,V_m\}$ of $V$ is called \textbf{equitable} if, for all $j=1, \dots,m$, the numbers $q_{j}(u):=|\Gamma(u)\cap V_j|$ do not depend on the choice of $u\in V_i$. 
These numbers, independent of the vertex, are denoted by $q_{ij}$ and form the $m\times m$ \textbf{quotient matrix} $Q(\mathfrak P) := (q_{ij})_{1\leqslant i,j\leqslant m}$. 
\begin{lemma}[{Godsil and Royle~\cite{GR}, Theorem 9.3.3}]
\label{lem:ep}
    Let $\Gamma$ be a graph with adjacency matrix $A$.
    Suppose that $\mathfrak P$ is an equitable partition of $\Gamma$.
    Then $\det(xI-Q(\mathfrak P))$ divides $\det(xI - A)$.
\end{lemma}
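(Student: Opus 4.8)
The plan is to realise the quotient matrix as the action of $A$ on a distinguished invariant subspace, and then invoke the elementary fact that an invariant subspace factors the characteristic polynomial.

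The starting point would be the \emph{characteristic matrix} of $\mathfrak P$: the $|V|\times m$ matrix $S$ whose $(v,j)$ entry is $1$ if $v\in V_j$ and $0$ otherwise, so that its $m$ columns are the indicator vectors of the parts $V_1,\dots,V_m$. Because the parts are nonempty and pairwise disjoint, $S$ has rank $m$. The crucial — and only mildly computational — step is to check the matrix identity
\[
AS = S\,Q(\mathfrak P).
\]
For $v\in V_i$, the $(v,j)$ entry of the left-hand side is $\sum_{w}A_{vw}S_{wj}=|\Gamma(v)\cap V_j|=q_{ij}$, where the last equality is precisely the defining property of an equitable partition; the $(v,j)$ entry of the right-hand side is $\sum_{k}S_{vk}q_{kj}=q_{ij}$ as well. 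Hence the column space of $S$ is $A$-invariant.

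To finish, I would complete the columns of $S$ to an invertible matrix $P=[\,S\mid T\,]$; then $AS=SQ(\mathfrak P)$ together with the expansion of the columns of $AT$ in this basis yields a block-triangular form
\[
P^{-1}AP=\begin{pmatrix}Q(\mathfrak P)&C\\0&D\end{pmatrix}
\]
for suitable matrices $C,D$, so that $\det(xI-A)=\det\bigl(xI-Q(\mathfrak P)\bigr)\cdot\det(xI-D)$, which is exactly the asserted divisibility. If one prefers to stay within orthogonal similarity — and thereby also conclude that the spectrum of $Q(\mathfrak P)$ is a sub-multiset of that of $A$ — one replaces $S$ by its normalisation $\widehat S:=S(S^{\top}S)^{-1/2}$, extends $\widehat S$ to an orthogonal matrix, and uses the symmetry of $A$ to see that the off-diagonal blocks vanish.

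There is no genuine obstacle here: the content is the entrywise verification $AS=SQ(\mathfrak P)$, after which the rest is the standard invariant-subspace argument. This is Theorem~9.3.3 of Godsil–Royle, so in the paper we simply cite it rather than reproduce the argument above.
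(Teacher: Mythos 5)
Your argument is correct and is precisely the standard proof of Godsil--Royle Theorem 9.3.3 that the paper cites without reproducing: the identity $AS=SQ(\mathfrak P)$ for the characteristic matrix $S$, followed by the invariant-subspace/block-triangularisation step. Nothing further is needed.
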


\subsection{The equitable partition $\mathfrak P_{m,n}$}

Let $m,n\in\mathbb{Z}$ with $m\geqslant 2$ and $n\geqslant 1$, and write $[n]:=\{1,2,\dots,n\}$.
Define
\[
D_n:=\operatorname{diag}(0,1,2,\dots,n-1)\in\mathbb{Z}^{n\times n},
\]
and let $E_n=(e_{ij})_{1\leqslant i,j\leqslant n}\in\{0,1\}^{n\times n}$ be given by
\[
e_{ij}\;=\;
\begin{cases}
1,&\text{if } i+j\leqslant n+1,\\
0,&\text{otherwise.}
\end{cases}
\]

For $i\in\{0,1,\dots,m-1\}$ and $j \in [n]$, define the vertex subsets
\[
V_{i,j}\;:=\;\bigl\{\,(\chi,\psi)\in S(m,n):\;
\chi(j)=i \text{ and } \psi(j)=1\,\bigr\}.
\]
Thus, $V_{i,j}$ consists of the coloured permutations in which the letter $1$ occupies position $j$ and has colour $i$ (equivalently, $\chi(\psi^{-1}(1))=i$).
Define the partition
\[
\mathfrak P_{m,n} := \{\,V_{i,j}:  i\in\{0,1,\dots,m-1\} ,\,\, j\in[n] \,\}
\]
of the vertex set of the (generalised) pancake graph.
See Figure~\ref{fig:P32} for an illustration of $\mathfrak P_{3,2}$.

Note that the equitable partition $\mathfrak P_{m,n}$ is the same as the one used by Blanco and Buehrle~\cite{BB3}.
However, by using the lexicographical ordering on the parts $V_{i,j}$ (see Lemma~\ref{lem:circ}), we can realise the quotient matrix as a circulant block matrix, which enables us to easily deduce more spectral properties of $\mathcal P_{m}(n)$.
In contrast, Blanco and Buehrle~\cite{BB,BB3} used a co-lexicographical ordering for their quotient matrices.

For $n\times n$ matrices $B_0,B_1,\dots,B_{m-1}$, we denote by $\operatorname{circ}\left(B_0,B_1,\dots,B_{m-1}\right)$ the circulant block matrix whose first row of blocks is $B_0,B_1,\dots,B_{m-1}$.
We write $O$ for the zero matrix with dimensions determined by context.

\begin{lemma}
\label{lem:circ}
Let $m \geqslant 2$ and $n \geqslant 1$ be integers. 
Then $\mathfrak P_{m,n}$ is an equitable partition of $\mathcal P_m(n)$ with quotient matrix 
\[
Q(\mathfrak P_{m,n})=\begin{cases} 
\operatorname{circ}\big( D_n,E_n \big) & m=2; \\
\operatorname{circ}\left(2D_n,E_n,\underbrace{O,\dots,O}_{m-3},E_n\right)  & m\geqslant 3.

\end{cases}
\]
\end{lemma}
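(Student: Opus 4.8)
The plan is to verify directly that $\mathfrak P_{m,n}$ is equitable and to compute the entries of the quotient matrix by analysing the effect of a prefix reversal $r_k^{\varepsilon}$ on the position and colour of the letter $1$. Fix a vertex $\sigma=(\chi,\psi)\in V_{i,j}$, so that $\psi(j)=1$ and $\chi(j)=i$. Applying $r_k^{\varepsilon}=s_{1,k}^{\varepsilon}$ does nothing to the position of $1$ when $k<j$, and when $k\geqslant j$ it moves $1$ to position $k+1-j$ and changes its colour from $i$ to $i+\varepsilon\pmod m$. So among the generators in $R_m$, those fixing the part are exactly the $r_k^{\varepsilon}$ with $k<j$, and those moving $\sigma$ into $V_{i',j'}$ with $j'\leqslant j$ and $i'-i\equiv\varepsilon$ are the reversals $r_{j+j'-1}^{\varepsilon}$ (one such $k=j+j'-1$ exists in $[n]$ precisely when $j+j'-1\leqslant n$, i.e.\ $j'+j\leqslant n+1$). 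This count depends only on the part $V_{i,j}$, not on $\sigma$, which is the definition of equitable; and it immediately identifies the nonzero blocks.

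Next I would read off the block structure. Index the parts so that blocks are grouped first by colour $i\in\mathbb Z_m$ and within each colour by position $j\in[n]$ (lexicographic order on $(i,j)$). The reversals with $k<j$ contribute, within the colour-$i$ diagonal block, the entry $q_{(i,j),(i,j)}=$ (number of generators fixing the part). For $m\geqslant 3$ there are two generators $r_k^{\pm}$ for each $k<j$, giving $2(j-1)$, i.e.\ the diagonal block is $2D_n$; for $m=2$ the two coincide, giving $j-1$, i.e.\ $D_n$. The reversals $r_{j+j'-1}^{\varepsilon}$ moving colour $i$ to colour $i+\varepsilon$ and position $j$ to position $j'$ contribute, for each admissible pair, exactly $1$ to the block indexed by (colour $i$, colour $i+\varepsilon$); the admissibility condition $j+j'\leqslant n+1$ is exactly the support of $E_n$, so that block is $E_n$. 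For $m\geqslant 3$, $\varepsilon=+1$ and $\varepsilon=-1$ give the two off-diagonal blocks $E_n$ in positions $+1$ and $-1$ of the circulant (which coincide when $m=3$, but the circulant notation $\operatorname{circ}(2D_n,E_n,O,\dots,O,E_n)$ handles this correctly), and all other blocks vanish. For $m=2$, $\varepsilon=+1\equiv-1$, so there is a single off-diagonal block $E_n$, giving $\operatorname{circ}(D_n,E_n)$. Since the block that $V_{i,j}$ maps to under a given $\varepsilon$ depends on $i$ only through $i+\varepsilon\pmod m$, the resulting matrix is block-circulant, as claimed.

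The one subtlety to handle carefully — and the main place an argument could go wrong — is the bookkeeping of which reversal indices $k\in[n]$ actually exist and are counted once versus twice, especially the boundary cases $k=j$ (the "no-op" reversal that fixes everything when $k=1$, but for $k=j$ reverses a block ending at $1$ and sends $1$ to position $1$) and the degenerate reversal $r_1^{\varepsilon}$. Concretely: $r_k^{\varepsilon}$ with $k\geqslant j$ sends $1$ from position $j$ to position $k-j+1$, which ranges over $\{1,2,\dots,n-j+1\}$ as $k$ ranges over $\{j,j+1,\dots,n\}$ — a bijection — so each target position $j'\in\{1,\dots,n-j+1\}$ is hit by exactly one value of $k$ (for each fixed $\varepsilon$), and the condition $j'\leqslant n-j+1$ is precisely $e_{j,j'}=1$. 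One must also check $r_1^{\varepsilon}$ is a genuine generator (it is, though when $m=2$ it is a loop; here $m\geqslant 2$ with $n\geqslant 1$, and when $k=1$ with $j=1$ the reversal only changes $\chi(1)$, giving an edge from $V_{i,1}$ to $V_{i+\varepsilon,1}$, consistent with $e_{1,1}=1$). Once this index bookkeeping is pinned down, the rest is immediate from the definitions of $D_n$ and $E_n$. I would present the $m\geqslant 3$ case in full and remark that $m=2$ follows by the same computation with $r_k^{+}=r_k^{-}$.
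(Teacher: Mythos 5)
Your proof is correct and follows essentially the same approach as the paper's: you track the position and colour of the letter $1$ under each prefix reversal, count the reversals with $k<j$ to get the diagonal blocks $2D_n$ (or $D_n$ for $m=2$), and use the bijection $k\mapsto k-j+1$ on $\{j,\dots,n\}$ to identify the off-diagonal blocks with $E_n$. Two parenthetical asides are inaccurate but harmless: for $m=3$ the circulant positions $+1$ and $-1$ are \emph{distinct} (they coincide only when $m=2$, which you treat separately anyway), and $r_1^{\varepsilon}$ is \emph{not} a loop when $m=2$ since it changes $\chi(1)$ and hence gives a genuine edge from $V_{i,1}$ to $V_{i+1,1}$ --- exactly as your own next clause correctly states.
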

\begin{proof}
Each element $\sigma=(\chi,\psi)\in S(m,n)$ corresponds to a unique ordered pair $(i,j)$ where $1$ is in position $j\in[n]$ with colour $i=\chi(j)\in\mathbb{Z}_m$ (equivalently, $i=\chi(\psi^{-1}(1))$). 
Hence, $\sigma$ lies exactly in one cell $V_{i,j}$; thus, the sets in $\mathfrak P_{m,n}$ are pairwise disjoint and cover the vertex set of $\mathcal P_m(n)$.

Let $x = (\chi_x,\psi_x) \in V_{i,j}$. 
Note that $1$ is in the $j$th position of $x$ with $\chi_x(j) = i$.
A prefix reversal $r_k^{\pm}$ fixes position $j$ of $x$ if and only if $k<j$. 
Such a flip reverses the prefix of length $k$ and adds $\pm1$ (mod $m$) to the colours inside that prefix, but leaves the entry $1^{i}$ at position $j$ unchanged. 
When $m\geqslant 3$, for each $k<j$, we have two types of reversals of the first $k$ positions: $r_k^{+}$ and $r_k^{-}$ (whereas $r_k^{+}=r_k^{-}$ when $m=2$). 
Therefore, the number of neighbours of $x$ that also belong to $V_{i,j}$ is
\[
d_j=\begin{cases}
2(j-1),& m\geqslant 3,\\
j-1,& m=2,
\end{cases}\qquad j=1,\dots,n.
\]
Moreover, whenever a prefix reversal $r_k^{\pm}$ moves the letter $1$ (i.e., $k\geqslant j$), the colour of $1$ changes from $i$ to $i\pm1\pmod m$, consequently, there are no edges from $V_{i,j}$ to any $V_{i,j^\prime}$ with $j^\prime\neq j$.
Note that the number of edges does not depend on the choice of $x \in V_{i,j}$.
Hence, we obtain diagonal blocks $2D_n$ of $Q(\mathfrak P_{m,n})$ for $m\geqslant 3$ (respectively $D_n$ for $m=2$). 

Now we consider the neighbours of $x$ that lie outside $V_{i,j}$.
Consider applying $r^{+}_k$ with $k\geqslant j$. 
After the flip, the position of $1$ changes from $j$ to $k-j+1$ and its colour changes from $i$ to $i+1\pmod m$.
Hence, $x$ is adjacent to (precisely one) vertex in $V_{i+1,k-j+1}$.
That is, $x$ has one neighbour in each of the parts $V_{i+1,j^\prime}$ for which $j + j^\prime \leqslant n+1$.
Note that, again, the number of neighbours does not depend on the choice of $x \in V_{i,j}$. 
Similarly, $x$ has one neighbour in each of the parts $V_{i-1,j^\prime}$ for which $j + j^\prime \leqslant n+1$.
Thus, we find that the $(i,i^\prime)$-block of $Q(\mathfrak P_{m,n})$ is equal to $E_n$, as required.
Hence, $\mathfrak P_{m,n}$ is an equitable partition of $\mathcal P_m(n)$ and $Q(\mathfrak P_{m,n})$ has the required form.
\end{proof}

The spectrum of a matrix $M$ is the multiset of its eigenvalues and is denoted by $\mathrm{Spec}\left(M\right)$.
The multiplicity-preserving union of two multisets is denoted by $\uplus$.
Next, we state a lemma that provides a formula for the spectrum of a circulant block matrix.



\begin{lemma}[{\cite[Theorem 6 (c)]{Friedman}}]
\label{lem:t6}
    Let $m,n\ge 2$ be integers and let $A_0,A_1,\dots,A_{m-1}\in \mathrm{Mat}_n(\mathbb C)$. 
Then 
\[
\mathrm{Spec}\left (\operatorname{circ}(A_0,A_1,\dots,A_{m-1})\right )=\biguplus_{j=0}^{m-1}\mathrm{Spec} \left (\sum_{i=0}^{m-1}\zeta^{ij}A_{i} \right ),
\] 
for some primitive $m$th root of unity $\zeta$.
\end{lemma}
    
The next lemma follows immediately from Lemma~\ref{lem:circ} via special cases of \cite[Theorem 10.3]{Bass94} and Lemma~\ref{lem:t6}. 

\begin{lemma}\label{lem:circ(DCOOC)}
Let $m,n \geqslant 2$ be integers.
Then
\begin{align}
\mathrm{Spec}\left(Q(\mathfrak P_{m,n})\right)=\begin{cases}
\mathrm{Spec}(D_n+E_n)\uplus \operatorname{Spec}(D_n-E_n) & m=2;\\
\biguplus_{k=0}^{m-1}\operatorname{Spec}\left(2D_n
+2\cos\tfrac{2\pi k}{m}E_n\right) & m\geqslant 3.
\end{cases}
\end{align}
        Moreover, if $m\equiv 0\pmod{4}$ then
        \[
        \operatorname{Spec}(2D_n)
  \uplus
  \operatorname{Spec}(2D_n)
  \uplus
  \operatorname{Spec}(2D_n+2E_n)\subseteq\mathrm{Spec}\left(Q(\mathfrak P_{m,n})\right).
        \]
\end{lemma}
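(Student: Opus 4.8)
The plan is to invoke the Fourier block-diagonalisation of block-circulant matrices and then read off the pieces. Write $\omega=e^{2\pi i/m}$, let $F_m$ denote the $m\times m$ discrete Fourier matrix, and reindex the blocks in Lemma~\ref{lem:circ} by $\mathbb{Z}_m$, say $Q(\mathfrak P_{m,n})=\operatorname{circ}(B_0,B_1,\dots,B_{m-1})$. Then $F_m\otimes I_n$ conjugates $\operatorname{circ}(B_0,\dots,B_{m-1})$ to the block-diagonal matrix whose $k$-th diagonal block is $\widehat B_k:=\sum_{j=0}^{m-1}\omega^{jk}B_j$, for $k=0,\dots,m-1$. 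This is precisely the special case of \cite[Theorem 10.3]{Bass94} and \cite[Theorem 6]{Friedman} that we quote, and it immediately gives $\mathrm{Spec}(Q(\mathfrak P_{m,n}))=\biguplus_{k=0}^{m-1}\mathrm{Spec}(\widehat B_k)$.

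Next I would compute the $\widehat B_k$ directly from Lemma~\ref{lem:circ}. When $m=2$ the nonzero blocks are $B_0=D_n$ and $B_1=E_n$ with $\omega=-1$, so $\widehat B_0=D_n+E_n$ and $\widehat B_1=D_n-E_n$, which is the first case. When $m\geqslant 3$ the only nonzero blocks are $B_0=2D_n$ and $B_1=B_{m-1}=E_n$, whence
\[
\widehat B_k=2D_n+(\omega^k+\omega^{-k})E_n=2D_n+2\cos\tfrac{2\pi k}{m}\,E_n .
\]
Since $D_n$ and $E_n$ are real symmetric, every $\widehat B_k$ is a real symmetric matrix, so its spectrum is a multiset of reals; this is what makes the displayed multiset identities meaningful, and it completes the first half of the lemma.

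For the containment, assume $m\equiv 0\pmod 4$ and write $m=4\ell$. It suffices to locate three pairwise distinct indices $k\in\{0,1,\dots,m-1\}$ at which $\widehat B_k$ takes the desired form. Taking $k=0$ gives $\cos 0=1$, so $\widehat B_0=2D_n+2E_n$; taking $k=m/4=\ell$ and $k=3m/4=3\ell$ gives $\cos\tfrac{\pi}{2}=\cos\tfrac{3\pi}{2}=0$, so $\widehat B_{m/4}=\widehat B_{3m/4}=2D_n$. These three indices are distinct because $m\geqslant 4$, so summing the corresponding terms of $\biguplus_{k=0}^{m-1}\mathrm{Spec}(\widehat B_k)$ yields
\[
\mathrm{Spec}(2D_n)\uplus\mathrm{Spec}(2D_n)\uplus\mathrm{Spec}(2D_n+2E_n)\subseteq\mathrm{Spec}(Q(\mathfrak P_{m,n})).
\]

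There is no real obstacle: once the block-circulant diagonalisation is quoted, everything reduces to elementary bookkeeping. The only points requiring a little care are matching the block indexing of Lemma~\ref{lem:circ} to the $\mathbb{Z}_m$-indexed form used in the diagonalisation theorem, and checking that $0$, $m/4$, $3m/4$ are genuinely distinct residues modulo $m$, so that the three spectra really do appear as disjoint summands of the multiset union.
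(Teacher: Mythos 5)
Your proof is correct and follows essentially the same route as the paper, which simply cites the block-circulant diagonalisation results of Bass--Estes--Guralnick and Friedman without further detail; you have accurately filled in that Fourier-conjugation argument, the computation of the blocks $\widehat B_k=2D_n+2\cos\tfrac{2\pi k}{m}E_n$, and the choice of indices $k=0,\,m/4,\,3m/4$ for the $m\equiv 0\pmod 4$ case. No gaps.
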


\subsection{Proofs of the main results}

We are now ready to prove our main results.

\begin{proof}[Proof of Theorem~\ref{thm:main}]
First, suppose that $m\geqslant 3$.
It is straightforward to check that $\lambda_1(Q(\mathfrak P_{m,n})) = \lambda_1\left(2D_n
+2E_n\right)$.
Furthermore, by Lemma~\ref{lem:circ(DCOOC)}, we have
\begin{align*}
\lambda_2(Q(\mathfrak P_{m,n})) &\geqslant \lambda_1\left(2D_n
+tE_n\right)
\end{align*}
for some nonzero $t<2$.
Observe that $\left [ \begin{smallmatrix}
    t & t\\
t & 2n-2
\end{smallmatrix} \right ]$ is the $2\times2$ principal submatrix of $2D_n+tE_n$ induced on its first and last rows/columns.
Hence, by Lemma~\ref{lem:cauchy},
\[
\lambda_{1}(2D_n+tE_n) \geqslant \frac{t+2n-2+\sqrt{((2n-2)-t)^2+4t^2}}{2} 
>2n-2.
\]

Clearly, $\lambda_1(\mathcal {P}_m(n)) = \lambda_1(Q(\mathfrak P_{m,n})) =2n$, and, by Lemma~\ref{lem:ep}, we have $\lambda_2(\mathcal {P}_m(n)) \geqslant \lambda_2(Q(\mathfrak P_{m,n}))$.
Thus, we find that 
\[
 \lambda_1(\mathcal {P}_m(n))-\lambda_2(\mathcal {P}_m(n)) \leqslant  \lambda_1(\mathcal {P}_m(n))-\lambda_1(2D_n+tE_n) < \lambda_1(\mathcal {P}_m(n))-(2n-2)=2.
\]

The case of $m=2$ follows \textit{mutatis mutandis} using the principal submatrix $\left [\begin{smallmatrix}
-1 &-1\\
-1 & n-1
\end{smallmatrix}\right ]$ of $D_n-E_n$.
\end{proof}

\begin{proof}[Proof of Theorem~\ref{thm:mult}]
    By Lemma~\ref{lem:ep} and Lemma~\ref{lem:circ(DCOOC)}, we have
    \begin{align*}
        \operatorname{Spec}(\mathcal P_m(n)) \supseteq \operatorname{Spec}(Q(\mathfrak P_{m,n}))  \supseteq \operatorname{Spec}(2D_n) \uplus \operatorname{Spec}(2D_n) \uplus \operatorname{Spec}(2D_n + 2E_n). 
    \end{align*}
    Clearly, $\operatorname{Spec}(2D_n) = \{0,2,4,\dots,2(n-1)\}$.
    Using \cite[Proposition 4.1]{GSW}\footnote{In \cite[\S4]{GSW}, the Hermitian matrix $B_n$ denotes the adjacency operator of the Schreier initial-reversal graph, which is represented
by the matrix $D_n+E_n$ in our notation.}, we find that
     \[
     \operatorname{Spec}(2D_n+2E_n)=\{0,2,4,\dots,2n\}\setminus\left \{2\left \lfloor \frac{n}{2} \right \rfloor\right \}.
     \]
     This completes our proof.
\end{proof}

Observe that our proof of Theorem~\ref{thm:mult} also provides a simple proof of the main results of Blanco and Buehrle~\cite[Theorem 5]{BB} and \cite[Theorem 4.1]{BB3}.
Furthermore, using a similar idea, one can also provide an alternative, straightforward proof of \cite[Theorem 5.1]{BB3}.

Finally, we take the opportunity to combine and restate (part of) two separate conjectures of Blanco and Buehrle, \cite[Conjecture 9(ii)]{BB} and \cite[Conjecture 6.2]{BB3}\footnote{We warn the reader of a typo in the statement of \cite[Conjecture 6.2]{BB3} where it should read $m>2$.}.

\begin{conjecture}
Let $m\geqslant 2$ be an integer.
Then
\[
\lambda_1(\mathcal P_m(n)) - \lambda_2(\mathcal P_m(n)) \to \begin{cases}
    1 & \text{ if $m = 2$; } \\
    2 & \text{ if $m \geqslant 3$; }
\end{cases}
\text{ as } n \to \infty.
\]
\end{conjecture}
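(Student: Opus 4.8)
Theorem~\ref{thm:main} already gives $\lambda_1(\mathcal P_m(n))-\lambda_2(\mathcal P_m(n))<2$ (resp.\ $<1$ when $m=2$) for every $n$, so the content of the conjecture is the matching asymptotic \emph{lower} bound on the gap. Since $\mathcal P_m(n)$ is $2n$‑regular with $\lambda_1=2n$ (resp.\ $n$‑regular with $\lambda_1=n$), this is equivalent to $\lambda_2(\mathcal P_m(n))\le 2n-2+o(1)$ (resp.\ $\le n-1+o(1)$); equivalently, the algebraic connectivity of $\mathcal P_m(n)$ tends to $2$ (resp.\ $1$). Lemma~\ref{lem:cauchy}, applied to the $(2n-2)$‑regular (resp.\ $(n-1)$‑regular) subgraph induced on $\{(\chi,\psi):\psi^{-1}(1)=n\}$, already yields the lower bound $\lambda_2\ge 2n-2$ (resp.\ $\ge n-1$); so the whole problem is to produce an upper bound on $\lambda_2$ of the same order.

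The most natural attack is an induction on $n$ using the recursive structure. Fixing the value and colour in position $n$ splits $S(m,n)$ into $mn$ parts, each inducing a copy of $\mathcal P_m(n-1)$ (the reversals $r_1^\pm,\dots,r_{n-1}^\pm$ fix position $n$ and act on the first $n-1$ positions exactly as the generators of $\mathcal P_m(n-1)$), and the only edges between parts come from the two full reversals $r_n^+,r_n^-$. Writing $A_n=\mathcal D_n+\mathcal O_n$ for the induced splitting — $\mathcal D_n$ block diagonal with $mn$ blocks equal to $A(\mathcal P_m(n-1))$, and $\|\mathcal O_n\|\le 2$ (resp.\ $\le 1$ since $r_n^+=r_n^-$ when $m=2$) — the top eigenspace $W$ of $\mathcal D_n$ is the span of the $mn$ part‑indicators, $\mathcal D_n|_W=(2n-2)I$, and $\mathcal D_n|_{W^\perp}\preceq\lambda_2(\mathcal P_m(n-1))\,I$. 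For $f\perp\mathbf 1$ I would write $f=f_W+f_{W^\perp}$: then $f_W\perp\mathbf 1$ because $\mathbf 1\in W$, and by the same kind of counting as in the proof of Lemma~\ref{lem:circ} — a full reversal sends each part onto the union of the parts with a different last value, with one common multiplicity — one obtains $\langle f_W,\mathcal O_n f_W\rangle\le 0$. Feeding this and the inductive bound on $\mathcal D_n|_{W^\perp}$ into the Rayleigh quotient of $A_n$ leaves one error term: (twice) the block $\Pi_W\mathcal O_n\Pi_{W^\perp}$ of $\mathcal O_n$ between $W$ and $W^\perp$.

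I expect two intertwined difficulties here, either of which is the real obstacle. First, the crude estimate $\|\Pi_W\mathcal O_n\Pi_{W^\perp}\|\le\|\mathcal O_n\|\le 2$ only gives $\lambda_2(\mathcal P_m(n))\le 2n$, which is vacuous; one needs that block to have small norm, for which the natural leverage is a second partition — a full reversal turns the last‑position data into a shift of the first‑position data, so $\mathcal O_n$ interchanges the span $W$ of last‑position‑data indicators with the span $W'$ of first‑position‑data indicators, whence $\Pi_W\mathcal O_n$ kills $(W')^\perp$ and the offending block lives on the rank‑$O(mn)$ space $W^\perp\cap(W+W')$; but turning ``bounded rank'' into ``$\lambda_2$ is not raised'' requires reconciling the two partitions (the last‑position one is $\mathcal D_n$‑invariant, the first‑position one is not), and I do not see how to do this cleanly. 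Second, even granting a per‑step error $\delta_n\to 0$, the naive recursion accumulates it, giving only $\lambda_2\le 2n-2+\sum_{j\le n}\delta_j$, i.e.\ a gap bounded below by a constant rather than tending to $2$; one would need a recursion whose coupling operator has norm $o(1)$ — impossible for the $2$‑regular graph of full reversals — or a ``max'' rather than ``sum'' bookkeeping, which seems to demand genuine structural information about the near‑extremal eigenfunctions, presumably slowly varying in the position of the letter~$1$.

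A cleaner‑looking alternative, which likewise reduces matters to a bounded piece of the spectrum, exploits that $\mathcal P_m(n)$ carries the genuine $S_n$‑action relabelling the values $1,\dots,n$ (this commutes with the prefix reversals, which act on positions, unlike the Cayley structure warned about in the Introduction). Then $A_n$ respects the isotypic decomposition, so $\lambda_2(\mathcal P_m(n))$ is the largest top eigenvalue over the isotypic components: the trivial $S_n$‑component is the adjacency matrix of the ``colour‑only'' graph on $\mathbb Z_m^n$ got by forgetting $\psi$, and the trivial $S_{n-1}$‑component recovers the size‑$mn$ quotient matrix $Q(\mathfrak P_{m,n})$ of Lemma~\ref{lem:circ}, whose relevant eigenvalue one should check is $2n-2+o(1)$ (from $\lambda_1(2D_n+2\cos\tfrac{2\pi}{m}E_n)$, resp.\ $\lambda_1(D_n-E_n)$). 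The remaining task — show every nontrivial isotypic component has top eigenvalue at most $2n-2+o(1)$ — is again a question of the sharp constant rather than of a merely positive gap, and is where I expect the real work to lie.
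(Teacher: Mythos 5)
This statement is one the paper itself does not prove: it is recorded as an open conjecture (a restatement of conjectures of Blanco and Buehrle), and the paper's own contribution in this direction, Theorem~\ref{thm:main}, is exactly the one--sided strict bound you take as your starting point. The paper even notes that the related Conjecture~\ref{con:2} is known only for $m=1$ (Cesi~\cite{C}). So there is no proof in the paper to compare yours against, and your proposal --- as you candidly say --- does not supply one either. Your reduction is correct: since $\lambda_1(\mathcal P_m(n))=2n$ (resp.\ $n$) and Theorem~\ref{thm:main} already forces $\lambda_2>2n-2$ (resp.\ $>n-1$), the conjecture is equivalent to the matching upper bound $\lambda_2(\mathcal P_m(n))\leqslant 2n-2+o(1)$ (resp.\ $n-1+o(1)$). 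Your recursive decomposition is sound as far as it goes (the $mn$ parts obtained by fixing the last value and colour do induce copies of $\mathcal P_m(n-1)$, the coupling operator has norm at most $2$, and the averaged quadratic form of $\mathcal O_n$ on the part--indicator space orthogonal to $\mathbf 1$ is indeed nonpositive), and your diagnosis of the two obstacles --- the cross term between $W$ and $W^\perp$, and the accumulation of per--step errors into a constant rather than an $o(1)$ quantity --- is accurate. But neither strategy is carried out, and the missing step is the entire content of the conjecture.

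One further caution about both of your routes: all of the machinery in this paper (Lemma~\ref{lem:ep} and Lemma~\ref{lem:cauchy}) transfers eigenvalues of quotient matrices and principal submatrices \emph{into} the spectrum of $\mathcal P_m(n)$, i.e.\ it only produces lower bounds on $\lambda_2$. Verifying that $\lambda_1\bigl(2D_n+2\cos\tfrac{2\pi}{m}E_n\bigr)=2n-2+o(1)$, as you suggest at the end, would show that the quotient gap $\lambda_1(Q(\mathfrak P_{m,n}))-\lambda_2(Q(\mathfrak P_{m,n}))$ tends to the right limit, and hence would derive this conjecture \emph{from} Conjecture~\ref{con:2}; it would not prove it unconditionally. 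The genuinely new ingredient required --- an asymptotically sharp upper bound on $\lambda_2$ of the full graph, whether via your isotypic decomposition or otherwise --- is absent from both the paper and your proposal.
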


Moreover, we provide an additional conjecture from cursory computational evidence.

\begin{conjecture}
\label{con:2}
    Let $m \geqslant 1$ and  $n \geqslant 2$ be integers. 
    Then
    \[\lambda_1(\mathcal P_m(n)) - \lambda_2(\mathcal P_m(n)) =\lambda_1(Q(\mathfrak P_{m,n})) - \lambda_2(Q(\mathfrak P_{m,n})).\]
\end{conjecture}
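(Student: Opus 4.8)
\emph{The reduction.} Since $\mathcal P_m(n)$ is connected and $d$-regular --- with $d=2n$ if $m\geqslant 3$, $d=n$ if $m=2$, and $d=n-1$ if $m=1$ --- and since the quotient matrix $Q(\mathfrak P_{m,n})$ has constant row sum $d$ as well (Lemma~\ref{lem:circ}), we have $\lambda_1(\mathcal P_m(n))=\lambda_1(Q(\mathfrak P_{m,n}))=d$. By Lemma~\ref{lem:ep} the multiset $\operatorname{Spec}(Q(\mathfrak P_{m,n}))$ is a sub-multiset of $\operatorname{Spec}(\mathcal P_m(n))$, so equality of the top eigenvalues forces $\lambda_2(\mathcal P_m(n))\geqslant\lambda_2(Q(\mathfrak P_{m,n}))$; hence Conjecture~\ref{con:2} is equivalent to the single inequality $\lambda_2(\mathcal P_m(n))\leqslant\lambda_2(Q(\mathfrak P_{m,n}))$, whose right-hand side is made explicit by Lemma~\ref{lem:circ(DCOOC)}. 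Equivalently, writing $A$ for the adjacency matrix, $U$ for the span of the indicators of the parts $V_{i,j}$, and $U^{\perp}$ for its orthogonal complement, $A$ preserves both $U$ and $U^{\perp}$ with $\operatorname{Spec}(A|_U)=\operatorname{Spec}(Q(\mathfrak P_{m,n}))$, and the task is to prove $\lambda_1(A|_{U^{\perp}})\leqslant\lambda_2(Q(\mathfrak P_{m,n}))$.

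\emph{The plan.} I would exploit the normal subgroup $N:=\mathbb Z_m^{\,n}\trianglelefteq S(m,n)$ of pure colour changes. Left multiplication by $N$ acts by automorphisms of $\mathcal P_m(n)$, so $A$ commutes with the permutation representation of $N$ on $\mathbb C^{S(m,n)}$, yielding an $A$-invariant orthogonal decomposition $\mathbb C^{S(m,n)}=\bigoplus_{\theta\in\widehat N}H_\theta$ into $N$-isotypic components, each of dimension $n!$, and correspondingly $A=\bigoplus_{\theta}A_\theta$. A direct computation identifies the trivial-character block $A_{\mathbf 1}$ --- the restriction of $A$ to the functions that depend only on the underlying permutation --- with $2I+2\,A(\mathcal P_1(n))$ if $m\geqslant 3$ (and with $I+A(\mathcal P_1(n))$ if $m=2$). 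Since left multiplication by the permutation part carries $H_\theta$ onto $H_{\pi\cdot\theta}$, the block $A_\theta$ depends up to isospectrality only on the multiset of values of $\theta$; and one checks that the matrices $2D_n+2\cos\tfrac{2\pi k}{m}E_n$ ($k=0,\dots,m-1$) of Lemma~\ref{lem:circ(DCOOC)} are precisely the restrictions of $A$ to the natural $n$-dimensional subspaces $U\cap H_{\theta_k}$, where $\theta_k$ is $k$ times the coordinate of $\widehat N$ dual to the colour of the letter $1$. Conjecture~\ref{con:2} would therefore follow from two claims: (i) an exact spectral-gap bound for the \emph{classical} pancake graph, namely $\lambda_2(\mathcal P_1(n))\leqslant n-2$ for $n\geqslant 3$ --- which, via $\lambda_2(Q(\mathfrak P_{m,n}))\geqslant\lambda_2(2D_n+2E_n)=2(n-1)$ and its $m=2$ analogue, yields $\lambda_2(A_{\mathbf 1})\leqslant\lambda_2(Q(\mathfrak P_{m,n}))$; and (ii) the uniform bound $\lambda_1(A_\theta)\leqslant\lambda_2(Q(\mathfrak P_{m,n}))$ for every nontrivial $\theta\in\widehat N$ --- ideally in the sharp form $\lambda_1(A_\theta)=\lambda_1\!\bigl(2D_n+2\cos\tfrac{2\pi k}{m}E_n\bigr)$ when $\theta$ has content $\{k,0,\dots,0\}$, i.e.\ the top of each ``twisted'' block is already attained on the small subspace $U\cap H_\theta$. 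The small cases (say $n\leqslant 4$) and the degenerate case $m=1$, where $N$ is trivial so the statement reduces to $\lambda_2(\mathcal P_1(n))=\lambda_2(Q(\mathfrak P_{1,n}))$ and must be argued on its own, would be disposed of by direct computation.

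\emph{The main obstacle.} Step (ii) is the crux, and I expect it to be the real difficulty. The $A_\theta$ are ``twisted'' prefix-reversal operators on $\mathbb C^{S_n}$, and since the prefix-reversal set is not closed under conjugation (as noted after Theorem~\ref{thm:mult}) there is no decomposition of $A_\theta$ coming from the representation theory of $S_n$; nor --- unlike in the derivation of Lemma~\ref{lem:circ} --- does a nontrivial twist admit an obvious adapted equitable partition that would pin $\lambda_1(A_\theta)$ down. Even step (i), a statement purely about $\mathcal P_1(n)$ asserting that the classical pancake graph has spectral gap at least $1$, demands an exact bound on the second eigenvalue of a graph whose spectrum is notoriously intractable; one would presumably have to obtain it by interlacing $\mathcal P_1(n)$ against a carefully chosen equitable partition, or from the literature on pancake-graph spectral gaps. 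I expect that any proof would end up bounding the twisted blocks $A_\theta$ uniformly in $\theta$ and $n$, and that doing so would at the same time illuminate the limiting behaviour $\lambda_1-\lambda_2\to 1$ or $2$ predicted by the preceding conjecture.
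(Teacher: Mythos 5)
This statement is labelled a \emph{conjecture} in the paper, and the authors explicitly state that it is open for $m\geqslant 2$ (it is known only for $m=1$, by Cesi~\cite{C}); the paper contains no proof for you to be compared against. Your submission is, by your own account, a plan rather than a proof, so the honest verdict is that it has a genuine gap --- indeed the gap is the entire content of the conjecture. That said, the framework you set up is sound: the reduction of the conjecture to the single inequality $\lambda_2(\mathcal P_m(n))\leqslant\lambda_2(Q(\mathfrak P_{m,n}))$ is correct (connectivity and regularity give $\lambda_1(A)=\lambda_1(Q)=d$, and Lemma~\ref{lem:ep} gives the reverse inequality for $\lambda_2$); the isotypic decomposition under left multiplication by $N=\mathbb Z_m^n$ is legitimate; the identification $A_{\mathbf 1}=2I+2A(\mathcal P_1(n))$ for $m\geqslant 3$ is right (the two flips $r_1^{\pm}$ become loops and each $r_k^{\pm}$, $k\geqslant 2$, doubles an edge of the classical pancake graph); and your step (i) does close via Cesi's theorem $\lambda_2(\mathcal P_1(n))=n-2$, since then $\lambda_2(A_{\mathbf 1})=2n-2\leqslant\lambda_2(2D_n+2E_n)\leqslant\lambda_2(Q)$. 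Your identification of the blocks $2D_n+2\cos\tfrac{2\pi k}{m}E_n$ with $A$ restricted to $U\cap H_{\theta_k}$ is also correct.

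The unresolved step (ii) --- showing $\lambda_1(A_\theta)\leqslant\lambda_2(Q(\mathfrak P_{m,n}))$ for every nontrivial character $\theta$ of $N$ --- is exactly where the difficulty of the conjecture lives, and you have not supplied an argument for it, only the (reasonable) hope that the top of each twisted block is attained on the small subspace $U\cap H_\theta$. Note that this hoped-for sharp form is itself a strengthening of the conjecture restricted to each block, so as written your plan is circular at that point unless you can produce an independent bound on $\lambda_1(A_\theta)$; nothing in the paper's toolkit (equitable partitions, interlacing, the circulant block structure) obviously does this for a nontrivial twist, as you yourself observe. So: correct reduction, correct structural analysis, step (i) closed by citing known results, but the statement remains unproved --- consistent with its status in the paper as an open conjecture for $m\geqslant 2$.
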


Conjecture~\ref{con:2} has been proved by Cesi in~\cite{C} when $m=1$. 
For $m\geqslant 2$, Conjecture~\ref{con:2} is still open.

\bigskip
\noindent{\bf Acknowledgements.} The authors thank the referees for their careful reading and helpful suggestions, which have improved the paper.
GG was supported by the Singapore Ministry of Education Academic Research Fund; grant numbers: RG14/24 (Tier 1) and MOE-T2EP20222-0005 (Tier 2). 

\bibliographystyle{plain}

\end{document}